\numberwithin{equation}{section}
\newtheorem{thm}{Theorem}[section]
\newtheorem{pro}[thm]{Proposition}
\newtheorem{cor}[thm]{Corollary}
\newtheorem{problem}{Problem}[section]
\theoremstyle{definition}
\newtheorem{dfn}[thm]{Definition}
\theoremstyle{remark}
\begin{document}
%%%%%%%%%%% Begin Topmatter %%%%%%%%%%%%%%%%%

\title[Alexandroff manifolds and homogeneous continua]
{Alexandroff manifolds and homogeneous continua}

\author{A. Karassev}
\address{Department of Computer Science and Mathematics,
Nipissing University, 100 College Drive, P.O. Box 5002, North Bay,
ON, P1B 8L7, Canada} \email{alexandk@nipissingu.ca}

\author{V. Todorov}
\address{Department of Mathematics, UACG, $1$ H. Smirnenski blvd.,
$1046$ Sofia, Bulgaria} \email{vtt-fte@uacg.bg}

\author{V. Valov}
\address{Department of Computer Science and Mathematics,
Nipissing University, 100 College Drive, P.O. Box 5002, North Bay,
ON, P1B 8L7, Canada} \email{veskov@nipissingu.ca}

\date{\today}
\thanks{The first author was partially supported by NSERC
Grant 257231-09.}
\thanks{The third author was partially supported by NSERC
Grant 261914-08.}

 \keywords{Cantor manifold, cohomological dimension, cohomology groups,
homogeneous compactum, separator, $V^n$-continuum}

\subjclass[2000]{Primary 54F45; Secondary 54F15}
\begin{abstract}
We prove the following result announced in \cite{tv}: Any homogeneous,
metric $ANR$-continuum is a $V^n_G$-continuum provided $\dim_GX=n\geq 1$ and $\check{H}^n(X;G)\neq 0$, where $G$ is a principal ideal domain.
This implies that any homogeneous $n$-dimensional metric $ANR$-continuum with $\check{H}^n(X;G)\neq 0$ is a $V^n$-continuum in the sense of Alexandroff \cite{ps}.
We also prove that any finite-dimensional homogeneous metric continuum $X$, satisfying $\check{H}^n(X;G)\neq 0$ for some group $G$ and $n\geq 1$,
cannot be separated by
a compactum $K$ with $\check{H}^{n-1}(K;G)=0$ and $\dim_G K\leq n-1$. This provides a partial answer to a question of Kallipoliti-Papasoglu \cite{kp}
whether any two-dimensional homogeneous Peano continuum cannot be separated by arcs.
\end{abstract}
\maketitle\markboth{}{Alexandroff manifolds and homogeneous continua}
%%%%%%%%%% End topmatter %%%%%%%%%%%%%%%%%%%%%

%%%%%%%%%%%%%%%%%%%%%%%%%%%%%%%%%%%%%%%%%%%%%%%%%%%%%%%%%%%%%%%%
%%%%%%%%%%%%%%%%%%%%%%%%%%%%%%%%%%%%%%%%%%%%%%%%%%%%%%%%%%%%%%%%

%%%%%%%%%%%%%%%%%% TABLE OF CONTENT %%%%%%%%%%%%%%%%%%%%%%%%%%%%%%%%%%

%\tableofcontents

%%%%%%%%%%%%%%%%%%%%%%%%%%%%%%%%%%%%%%%%%%%%%%%%%%%%%%%%%%%%%%%%%%%
%%%%%%%%%%%%%%%%%%%%%%%%%%%%%%%%%%%%%%%%%%%%%%%%%%%%%%%%%%%%%%%%%%%%%%

\section{Introduction}
%All spaces in this paper are assumed to be at least normal.

Cantor manifolds and stronger versions of Cantor manifolds were introduced to describe some properties of Euclidean manifolds.
According to Bing-Borsuk conjecture \cite{bb} that any homogeneous metric $ANR$-compactum of dimension $n$ is an $n$-manifolds,
finite-dimensional homogeneous metric $ANR$-continua are supposed to share some properties with Euclidean manifolds. One of the first
results in that direction established by Krupski \cite{kru} is that any homogeneous metric continuum of dimension $n$ is a Cantor
$n$-manifold. Recall that
a space $X$ is a
{\em Cantor $n$-manifold} if any partition of $X$ is of dimension $\geq n-1$ \cite{u} (a partition of $X$
is a closed set $P\subset X$ such that $X\backslash P$ is the union of two open disjoint sets).
In other words, $X$ cannot be the union
of two proper closed sets whose intersection is of covering
dimension $\leq n-2$. Stronger versions of Cantor manifolds were considered  by Had\v{z}iivanov \cite{h} and
Had\v{z}iivanov and Todorov \cite{ht}.
But the strongest specification of Cantor manifolds is the notion of $V^n$-continua introduced by
Alexandroff \cite{ps}: a compactum $X$ is a {\em $V^n$-continuum}
if for every two closed disjoint massive subsets $X_0$, $X_1$ of $X$
there exists an open cover
$\omega$ of $X$ such that there is no partition $P$ in $X$ between
$X_0$ and $X_1$ admitting an $\omega$-map into a space $Y$ with
$\dim Y\leq n-2$ ($f\colon P\to Y$ is said to be an $\omega$-map if there exists
an open cover $\gamma$ of $Y$ such that $f^{-1}(\gamma)$ refines $\omega$). Recall that a
massive subset of $X$ is a set with non-empty interior in $X$.

More
general concepts of the above notions were considered in \cite{kktv}. In particular, we are
going to use the following one, where
$\mathcal{C}$ is a class of topological spaces.

\begin{dfn}\label{dfn1}
A space $X$ is an {\em Alexandroff manifold with respect to $\mathcal C$}
(br., {\em Alexandroff $\mathcal C$-manifold}) if for every two closed, disjoint, massive subsets $X_0$, $X_1$ of $X$
there exists an open cover
$\omega$ of $X$ such that there is no partition $P$ in $X$ between
$X_0$ and $X_1$ admitting an $\omega$-map onto a space $Y\in\mathcal C$.
\end{dfn}

In this paper we continue investigating to what extend homogeneous continua have common properties with Euclidean manifolds.
One of the main questions in this direction is whether any homogeneous $n$-dimensional metric $ANR$-compactum $X$ is
a $V^n$-continuum, see \cite{tv}. A partial answer of this question, when \v{C}ech cohomology group $\check{H}^n(X)$ is
non-trivial, was announced in \cite{tv}. One of the aims of the paper is to provide the proof of this fact, see Section 3. Our proof is based on
the properties of $(n,G)$-bubbles and $V^n_G$-continua investigated in Section 2. We also provide a partial answer to a question of
Kallipoliti-Papasoglu \cite{kp}.

\section{$(n,G)$-bubbles and $V^n_G$-continua}

In this section we investigate the connection between $(n,G)$-bubbles and $V^n_G$-continua.

For every abelian group $G$ let $\dim_GX$ be the cohomological dimension of $X$ with respect to $G$, and $\check{H}^n(X;G)$ denotes the
reduced $n$-th \v{C}ech cohomology group of $X$ with coefficients in $G$.

Reformulating the original definition of Kuperberg \cite{kup},
Yokoi \cite{yo} provided the following definition (see also \cite{ch} and \cite{kr}):
\begin{dfn}
If $G$ is an abelian group and $n\geq 0$, a compactum $X$
is called an {\em $(n,G)$-bubble} if $\check{H}^n(X;G)\neq 0$ and $\check{H}^n(A;G)=0$ for every proper closed subset $A$ of $X$.
Following \cite{vt1} we say that a compactum $X$ is a {\em generalized $(n,G)$-bubble} provided there exists a surjective map $f\colon X\to Y$ such that
the homomorphism $f^*\colon\check{H}^n(Y;G)\to\check{H}^n(X;G)$ is nontrivial, but $f_A^*(\check{H}^n(Y;G))=0$ for any proper closed subset $A$ of $X$,
where $f_A$ is the restriction of $f$ over $A$.
\end{dfn}
We also need the following notion:
\begin{dfn}
A compactum $X$ is said to be a {\em $V^n_G$-continuum} \cite{ss}
if for every two closed, disjoint, massive subsets $X_0$, $X_1$ of $X$ there exists an open cover $\omega$ of $X$ such that any
partition $P$ in $X$ between $X_0$ and $X_1$ does not admit an $\omega$-map $g$ onto a space $Y$ with
$g^*\colon\check{H}^{n-1}(Y;G)\to\check{H}^{n-1}(P;G)$ being trivial.
\end{dfn}
Since $\check{H}^{n-1}(Y;G)=0$ for any compactum $Y$ with
$\dim_GY\leq n-2$, $V^n_G$-continua are Alexandroff manifolds with respect to the class  $D^{n-2}_G$ of
all spaces of dimension $\dim_G\leq n-2$. Moreover, if $X\in V^n_G$, then for every partition $C$ of $X$ we have $\check{H}^{n-1}(C;G)\neq 0$.
The last observation implies $\dim_GX\geq n$ provided $X$ is a metric $V^n_G$-compactum such that  either $X\in ANR$ or
$\dim X<\infty$ and $G$ is countable.
Indeed, if $\dim_GX\leq n-1$, then each $x\in X$ has a local base of open sets $U$ whose boundaries are of dimension $\dim_G\leq n-2$, see \cite{dk}.
Hence, any such a boundary $\Gamma$ is a partition of $X$ with $\check{H}^{n-1}(\Gamma;G)=0$.

Next theorem was established in \cite[Theorem 3]{ss} for finite-dimensional metric $(n,G)$-bubbles. Let us note that, according to \cite{yo},
the examples of
Dranishnikov \cite{dr} and Dydak-Walsh \cite{dw} show the existence of an infinite-dimensional $(n,\mathbb Z)$-bubble with $n\geq 5$.

\begin{thm}\label{2.1}
Any generalized $(n,G)$-bubble $X$ is a $V^n_G$-continuum.
\end{thm}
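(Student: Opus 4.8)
The plan is to argue by contradiction: suppose $X$ is a generalized $(n,G)$-bubble that is not a $V^n_G$-continuum. Then there are closed, disjoint, massive subsets $X_0,X_1\subset X$ witnessing the failure: for \emph{every} open cover $\omega$ of $X$ there is a partition $P=P_\omega$ in $X$ between $X_0$ and $X_1$ that admits an $\omega$-map $g_\omega$ onto some compactum $Y_\omega$ with $g_\omega^*\colon\check H^{n-1}(Y_\omega;G)\to\check H^{n-1}(P_\omega;G)$ trivial. Fix the surjection $f\colon X\to Y$ coming from the generalized bubble hypothesis, so $f^*\colon\check H^n(Y;G)\to\check H^n(X;G)$ is nontrivial but $f_A^*$ is trivial on every proper closed $A\subsetneq X$. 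Since $\inte X_0\neq\emptyset$ and $\inte X_1\neq\emptyset$, both $X_0$ and $X_1$ are proper closed subsets, hence $f_{X_0}^*=0$ and $f_{X_1}^*=0$.

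First I would set up a Mayer--Vietoris / exact-sequence argument in \v Cech cohomology to exploit the partition. Given a partition $P$ between $X_0$ and $X_1$, write $X=X_0'\cup X_1'$ where $X_i'$ is the closure of the component of $X\setminus P$ containing $\inte X_i$ together with $P$ (so $X_0'\cap X_1'=P$ and each $X_i'$ is a proper closed subset of $X$). The Mayer--Vietoris sequence for the closed cover $\{X_0',X_1'\}$ gives
\begin{equation*}
\check H^{n-1}(P;G)\xrightarrow{\ \delta\ }\check H^n(X;G)\xrightarrow{\ (i_0^*,i_1^*)\ }\check H^n(X_0';G)\oplus\check H^n(X_1';G).
\end{equation*}
The class in $\check H^n(X;G)$ that is hit by $f^*$ restricts trivially to both $X_0'$ and $X_1'$ (since these are proper closed subsets and $f_{X_i'}^*=0$), so that class lies in the image of the connecting map $\delta$. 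Hence there is a nonzero class in $\check H^{n-1}(P;G)$ coming from the relevant part of $\check H^n(Y;G)$; more precisely $\delta$ restricted to the image of the appropriate composite is nontrivial. Now I would use the $\omega$-map $g$ onto $Y_\omega$: since $g^*$ is trivial on $\check H^{n-1}$, the class on $P$ is "killed" upon pulling back, and by continuity of \v Cech cohomology a sufficiently fine choice of $\omega$ (so that $g$ factors the identity on $P$ up to the cover, or so that the relevant cocycle is supported on sets of the cover $\omega$) forces that class to already be zero on $P$ — contradicting the previous paragraph.

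The technical heart, and the step I expect to be the main obstacle, is making the last implication precise: one must show that a nontrivial \v Cech cohomology class on the partition $P_\omega$ cannot survive an $\omega$-map with trivial induced homomorphism once $\omega$ is chosen finely enough relative to $f$. This is where one uses that \v Cech cohomology is the inverse limit over nerves of finite open covers: choose $\omega$ refining the pullback under $f$ of a cover of $Y$ that is fine enough to "see" a chosen nonzero element $\alpha\in\check H^n(Y;G)$ with $f^*\alpha\neq 0$; then any $\omega$-map $g\colon P_\omega\to Y_\omega$ has nerve-level factorizations that, composed with $\delta$ and with $f^*$, express $f^*\alpha$ (up to restrictions that vanish on the proper closed pieces) as lying in $g^*(\check H^{n-1}(Y_\omega;G))=0$, forcing $f^*\alpha=0$, a contradiction. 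Concretely I would: (i) pick $\alpha$ and a finite open cover $\mathcal V$ of $Y$ with $\alpha$ represented on the nerve $N(\mathcal V)$; (ii) set $\omega=f^{-1}(\mathcal V)$ (refined if necessary); (iii) for the resulting $P_\omega$, $g_\omega$, $Y_\omega$, chase the diagram combining the Mayer--Vietoris boundary $\delta$, the maps induced by $f|_{X_i'}$, and the factorization of $g_\omega$ through the nerve of $\omega$; (iv) conclude $f^*\alpha$ factors through $g_\omega^*=0$. The remaining details — that the $X_i'$ are genuinely proper and closed, that the massive hypothesis guarantees this for all fine enough covers, and the exactness bookkeeping — are routine given Theorem~\ref{2.1}'s finite-dimensional predecessor in \cite{ss}, whose argument this generalizes by replacing "$\check H^n(A;G)=0$" with "$f_A^*=0$" throughout.
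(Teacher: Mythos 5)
Your overall strategy (Mayer--Vietoris plus the vanishing of $f^*_A$ on proper closed subsets, combined with the nerve factorization of an $\omega$-map) is the right one, but the step you yourself flag as the technical heart is exactly where the sketch breaks down, and the proposed fix --- take $\omega$ to refine $f^{-1}(\mathcal V)$ for a cover $\mathcal V$ of $Y$ on whose nerve a class $\alpha$ with $f^*\alpha\neq 0$ is realized --- does not close the gap. The point is a quantifier problem: in the definition of a $V^n_G$-continuum the cover $\omega$ must be chosen \emph{before} the partition, while your nonzero class $d\in\check{H}^{n-1}(P;G)$ with $\delta(d)=f^*\alpha$ is produced by a space-level exactness argument on the pieces $X_0',X_1'$, which vary with $P$. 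The triviality of $g_\omega^*$ only kills classes of $\check{H}^{n-1}(P;G)$ that are pulled back from the nerve of a cover $\beta=g_\omega^{-1}(\text{finite cover of }Y_\omega)$ refining $\omega|_P$; continuity of \v{C}ech cohomology says $d$ is realized on the nerve of \emph{some} sufficiently fine cover of $P$, but ``sufficiently fine'' depends on $P$ and $d$, and nothing forces $d$ to be realized already on the nerve of $\omega(P)$, hence on that of $\beta$. Realizing the degree-$n$ class $\alpha$ on $N(\mathcal V)$ does not help: the hypothesis $f^*_{X_i'}=0$ is a statement about the spaces $X_i'$, and transferring it to the subcomplexes of the nerve of $f^{-1}(\mathcal V)$ spanned by elements meeting $X_i'$ would require refining covers of $X_i'$ that again depend on the partition. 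So step (iv) cannot be completed as written.

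The paper resolves precisely this point by running Mayer--Vietoris twice. First, for the \emph{fixed} pair $F_1=X\setminus U_1$, $F_2=X\setminus U_2$ (before any partition appears): a nonzero $\mathrm{e}_1\in f^*(\check{H}^n(Y;G))$ restricts trivially to each $F_k$, so $\mathrm{e}_1=\delta(\mathrm{e}_2)$ for a fixed nonzero $\mathrm{e}_2\in\check{H}^{n-1}(F_1\cap F_2;G)$, and by continuity $\mathrm{e}_2=p_\omega^*(\mathrm{e})$ for a single finite cover $\omega$ of the fixed compactum $F_1\cap F_2$ and some $\mathrm{e}\in\check{H}^{n-1}(|\omega|;G)$; this $\omega$, extended by $U_1,U_2$ to a cover of $X$, is the cover demanded by the definition. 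Second, for an arbitrary partition $C$ with pieces $P_1,P_2$, naturality of the connecting homomorphism alone (no vanishing on $P_1,P_2$ is needed) gives $\delta_1(i^*(\mathrm{e}_2))=\mathrm{e}_1\neq 0$, so the restriction of $\mathrm{e}_2$ to $C$ is nonzero and, crucially, is already realized on $|\omega(C)|$ as $i_C^*(\mathrm{e})$. Since $\beta$ refines $\omega(C)$, one has $p_{\omega(C)}^*=p_\beta^*\circ\varphi_\beta^*$, and $p_\beta^*$ factors through the trivial $g_\omega^*$ --- contradiction. The missing idea in your sketch is this partition-independent, nerve-level class on the fixed set $X\setminus(U_1\cup U_2)$; without it the triviality of $g_\omega^*$ cannot be brought to bear on your class $d$, so you should replace the choice $\omega=f^{-1}(\mathcal V)$ by the cover furnished by continuity for $\mathrm{e}_2$, after which the rest of your diagram chase goes through essentially as in the paper.
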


\begin{proof}
Let $f\colon X\to Y$ be a map such that $f^*(\check{H}^n(Y;G))\neq 0$ and $f_A^*(\check{H}^n(Y;G))=0$ for any proper closed set $A\subset X$.
If $\omega$ is a finite open cover of  a closed
set $Z\subset X$, we denote by $|\omega|$ and $p_\omega$, respectively, the nerve of $\omega$ and a map from $Z$ onto $|\omega|$ generated by a
partition of unity subordinated to $\omega$.
Furthermore, if $C\subset Z$ and $\omega(C)=\{W\in\omega:W\cap C\neq\varnothing\}$, then $p_{\omega(C)}\colon C\to |\omega(C)|$ is the restriction
$p_\omega|C$.
Recall also that $p_\omega$ generates maps $p_\omega^*\colon\check{H}^k(|\omega|;G)\to\check{H}^k(Z;G)$, $k\geq 0$. Moreover, if
$q_\omega\colon Z\to|\omega|$ is a map generating by (another) partition of unity subordinated to $|\omega|$, then $p_\omega$ and $q_\omega$
are homotopic. So, $p_\omega^*=q_\omega^*$.

\textit{Claim $1$. For every non-empty open sets $U_1$ and $U_2$ in $X$ with $\overline{U}_1\cap\overline{U}_2=\varnothing$ there exist an
open cover $\omega$ of $X\backslash(U_1\cup U_2)$, a map $p_\omega\colon X\backslash(U_1\cup U_2)\to|\omega|$  and an element
$e\in\check{H}^{n-1}(|\omega|;G)$ such that $p_{\omega(C)}^*(i_C^*(\mathrm{e}))\neq 0$
for every partition $C$ of $X$ between $\overline{U}_1$ and $\overline{U}_2$, where $i_C$ is the inclusion $|\omega(C)|\hookrightarrow|\omega|$}.

To prove this claim we follow the arguments from the proof of \cite[Theorem]{vt1}.
Let $U_1$ and $U_2$ be non-empty open subsets of $X$ with disjoint closures, and $i_k:F_k\hookrightarrow X$ be the inclusion of
$F_k=X\backslash U_k$ into $X$, $k=1,2$. Consider the Mayer-Vietoris exact sequence
{ $$
\begin{CD}
\check{H}^{n-1}(F_1\cap F_2;G)@>{{\delta}}>>\check{H}^{n}(X;G)@>{{j}}>>\check{H}^{n}(F_1;G)\oplus\check{H}^{n}(F_2;G)\rightarrow
\end{CD}
$$}\\
with $j=(i_1^*,i_2^*)$, and
choose a non-zero element $\mathrm{e}_1\in f^*(\check{H}^n(Y;G))\subset\check{H}^n(X;G)$. For each $k=1,2$ we have the commutative diagram,
where $\delta_k$ is the inclusion of $f(F_k)$ into $Y$:
{ $$
\begin{CD}
\check{H}^{n}(Y;G)@>{{f^*}}>>\check{H}^{n}(X;G)\\
@ VV{\delta_k^*}V
@VV{i_k^*}V\\
\check{H}^{n}(f(F_k);G)@>{{f_{F_k}^*}}>>\check{H}^{n}(F_k;G).
\end{CD}
$$}\\
So  $i_k^*(\mathrm{e}_1)=0$, $k=1,2$, which yields
$\mathrm{e}_1=\delta(\mathrm{e}_2)$ for some non-zero element
$\mathrm{e}_2\in\check{H}^{n-1}(F_1\cap F_2;G)$. Then there exist an open cover $\omega$ of $F_1\cap F_2=X\backslash(U_1\cup U_2)$, a map
$p_\omega\colon F_1\cap F_2\to|\omega|$ and
$\mathrm{e}\in\check{H}^{n-1}(|\omega|;G)$ with $p_\omega^*(\mathrm{e})=\mathrm{e}_2$.

Let $C$ be a partition of $X$ between $\overline{U}_1$ and $\overline{U}_2$. So, $X=P_1\cup P_2$ and $C=P_1\cap P_2$, where each $P_k$ is
a closed subset of $X$ containing $\overline{U}_k$, $k=1,2$. Denote by $i:C\hookrightarrow F_1\cap F_2$, $i_1:P_1\hookrightarrow F_2$
and $i_2:P_2\hookrightarrow F_1$ the corresponding inclusions.
Then we have the following commutative diagram, whose rows are Mayer-Vietoris
sequences:

{ $$
\begin{CD}
\check{H}^{n-1}(F_1\cap F_2;G)@>{{\delta}}>>\check{H}^{n}(X;G)@>{{j}}>>\check{H}^{n}(F_2;G)\oplus\check{H}^{n}(F_1;G)\\
@ VV{i^*}V
@VV{id}V@VV{i_1^*\oplus i_2^*}V\\
\check{H}^{n-1}(C;G)@>{{\delta_1}}>>\check{H}^{n}(X;G)@>{{j_1}}>>\check{H}^{n}(P_1;G)
\oplus\check{H}^{n}(P_2;G).
\end{CD}
$$}\\
Obviously, $$\delta_1(i^*(\mathrm{e}_2))=id(\delta(\mathrm{e}_2))=\mathrm{e}_1\neq 0.\leqno{(1)}$$ On the other hand, the commutativity of the diagram

{ $$
\begin{CD}
\check{H}^{n-1}(|\omega|;G)@>{{p_{\omega}^*}}>>\check{H}^{n-1}(F_1\cap F_2;G)\\
@ VV{i_C^*}V
@VV{i^*}V\\
\check{H}^{n-1}(|\omega(C)|;G)@>{{p_{\omega(C)}^*}}>>\check{H}^{n-1}(C;G)
\end{CD}
$$}\\
implies that $p_{\omega(C)}^*(i_C^*(\mathrm{e}))=i^*(p_{\omega}^*(\mathrm{e}))=i^*(\mathrm{e}_2)$. Therefore, according to (1),
$p_{\omega(C)}^*(i_C^*(\mathrm{e}))\neq 0$. This completes the proof of Claim 1.

Now, we can show that $X\in V^n_G$. Let $U_1$ and $U_2$ be non-empty open subsets of $X$ with disjoint closures. Then there exists
a finite open cower $\omega$ of $X\backslash(U_1\cup U_2)$, a map $p_\omega\colon X\backslash(U_1\cap U_2)\to|\omega|$ and an element
$\mathrm{e}\in\check{H}^{n-1}(|\omega|;G)$ satisfying the conditions from Claim 1.
For each $W\in\omega$ let $h(W)$ be an open subset of $X$ extending $W$. So, $\gamma=\{h(W):W\in\omega\}\cup\{U_1,U_2\}$ is a
finite open cover of $X$ whose restriction on $X\backslash(U_1\cup U_2)$ is $\omega$.

Suppose there exists a partition $C$ of $X$ between $\overline{U}_1$ and
$\overline{U}_2$ admitting a $\gamma$-map $g$ onto a space $T$ with $g^*(\check{H}^{n-1}(T;G))=0$. Thus, we can find a finite open cover $\alpha$
of $T$  such that
$\beta=g^{-1}(\alpha)$ is refining $\omega$. Let $p_\beta\colon C\to|\beta|$ be a map onto the nerve of $\beta$ generated by a partition
of unity subordinated to $\beta$. Obviously, the function $V\in\alpha\rightarrow g^{-1}(V)\in\beta$ generates a
a simplicial homeomorphism $g^{\alpha}_\beta\colon|\alpha|\to|\beta|$. Then the maps $p_\beta$ and
$g_\alpha=g^{\alpha}_\beta\circ\pi_\alpha\circ g$, where $\pi_\alpha\colon T\to|\alpha|$ is a map generated by a partition of unity
subordinated to $|\alpha|$,
are homotopic. Hence, $p_{\beta}^*=g^*\circ\pi_{\alpha}^*\circ (g^{\alpha}_\beta)^*$.
Because $g^*\colon\check{H}^{n-1}(T;G)\to\check{H}^{n-1}(C;G)$
is a trivial map, the last equality implies that so is the map $p_{\beta}^*\colon\check{H}^{n-1}(|\beta|;G)\to\check{H}^{n-1}(C;G)$.
On the other hand, since $\beta$
refines $\omega$, we can find a map $\varphi_\beta\colon|\beta|\to |\omega(C)|$ such that $p _{\omega(C)}$ and $\varphi_\beta\circ p_{\beta}$
are homotopic. Therefore, $p_{\omega(C)}^*=p_{\beta}^*\circ\varphi_\beta^*$.
According to Claim 1,
$p_{\omega(C)}^*(\mathrm{e}_C)\neq 0$, where $\mathrm{e}_C$ is the element $i_C^*(\mathrm{e})\in\check H^{n-1}(|\omega(C)|;G)$. So,
$p_{\beta}^*(\varphi_\beta^*(\mathrm{e}_C))\neq 0$, which contradicts the triviality of $p_{\beta}^*$.
\end{proof}

We can extend the definition of $V^n_G$-continua as follows:
\begin{dfn}
A compactum $X$ is said to be a
{\em $V^n_G$-continuum with respect to a given class $\mathcal A$} if
for every two closed, disjoint, massive subsets $X_0$, $X_1$ of $X$ there exists an open cover $\omega$ of $X$ such that any
partition $P$ in $X$ between $X_0$ and $X_1$ does not admit an $\omega$-map $g$ onto a space $Y\in\mathcal A$ with
$g^*\colon\check{H}^{n-1}(Y;G)\to\check{H}^{n-1}(P;G)$ being trivial.
\end{dfn}
Recall that a metric space  $X$ is {\em strongly $n$-universal} if any
map $g:K\to X$, where $K$ is a metric compactum of dimension $\dim K\leq n$, can be approximated by embeddings.

\begin{thm}\label{2.2}
Let $X$ be a metric compactum containing a strongly $n$-universal dense subspace $M$
such that $M$ is an absolute extensor for $n$-dimensional compacta with $n\geq 1$. Then $X$ is a $V^n_G$-continuum with
respect to the class $D^{n-1}_G$ of all spaces of dimension $\dim_G\leq n-1$. In particular, $X$ is an Alexandroff manifold
with respect to the class $D^{n-2}_G$.
\end{thm}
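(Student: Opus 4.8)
The plan is to find, inside $X$, a topologically embedded $n$-sphere $B$ that meets $\inte X_0$ and $\inte X_1$ in $n$-cells, to use that $S^n$ is an $(n,G)$-bubble so that Theorem~\ref{2.1} makes $B$ a $V^n_G$-continuum, and to transport the cover it produces up to a cover $\omega$ of $X$. A partition $P$ of $X$ between $X_0$ and $X_1$ always meets $B$ in a partition of $B$ between the two $n$-cells, and an $\omega$-map of $P$ restricts to an $\omega_B$-map of that partition of $B$; the hypothesis $\dim_G Y\le n-1$ on the target will enter precisely to make the relevant restriction homomorphism in degree $n-1$ surjective, which is what converts ``$g^{*}$ trivial on $P$'' into ``the restricted map is trivial on $B$'', contradicting the bubble property of $B$.

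To build $B$, fix closed, disjoint, massive $X_0,X_1\subset X$. Since $M$ is dense and $\inte X_i\neq\varnothing$, choose $m_i\in M\cap\inte X_i$ (so $m_0\neq m_1$), and fix two disjoint closed $n$-cells $D_0,D_1\subset S^n$. The map $D_0\cup D_1\to M$ collapsing $D_i$ to $m_i$ is defined on a closed subset of the $n$-dimensional compactum $S^n$, so, $M$ being an absolute extensor for $n$-dimensional compacta, it extends to $\varphi\colon S^n\to M$; since $M$ is strongly $n$-universal and $\dim S^n=n$, approximate $\varphi$ by an embedding $h\colon S^n\hookrightarrow M\subset X$ close enough that $h(D_i)\subset\inte X_i$. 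Put $B=h(S^n)$ and $B_i=h(D_i)$: then $B$ is a compactum (hence closed in $X$), $B_0,B_1$ are disjoint closed subsets of $B$ with nonempty interior in $B$ (i.e.\ massive in $B$), and $B_i\subset X_i$. Moreover $B\cong S^n$ is an $(n,G)$-bubble (classical: $\check H^n(S^n;G)\neq 0$, while every proper closed subset of $S^n$ lies in $\mathbb{R}^n$ and so has trivial $n$-th \v{C}ech cohomology), hence a generalized $(n,G)$-bubble, so Theorem~\ref{2.1} gives an open cover $\omega_B$ of $B$ such that no partition $Q$ of $B$ between $B_0$ and $B_1$ admits an $\omega_B$-map $h\colon Q\to Z$ onto a space $Z$ with $h^{*}\colon\check H^{n-1}(Z;G)\to\check H^{n-1}(Q;G)$ trivial.

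Now extend $\omega_B$ to a cover of $X$: for each $W\in\omega_B$ choose open $\widetilde W\subset X$ with $\widetilde W\cap B=W$ and set $\omega=\{\widetilde W:W\in\omega_B\}\cup\{X\setminus B\}$, so $\omega|_B=\omega_B$. I claim $\omega$ witnesses the property for $(X_0,X_1)$. Suppose not: some partition $P$ of $X$ between $X_0$ and $X_1$ admits an $\omega$-map $g\colon P\to Y$ with $Y\in D^{n-1}_G$ and $g^{*}\colon\check H^{n-1}(Y;G)\to\check H^{n-1}(P;G)$ trivial. Writing $X\setminus P=G_0\sqcup G_1$ with $X_i\subset G_i$, the set $Q=P\cap B$ is a partition of $B$ between $B_0$ and $B_1$; restricting to $B$ a cover $\gamma$ of $Y$ with $g^{-1}(\gamma)$ refining $\omega$ shows at once that $g|_Q\colon Q\to g(Q)$ is an $\omega_B$-map onto $g(Q)$, so by the choice of $\omega_B$ the map $(g|_Q)^{*}\colon\check H^{n-1}(g(Q);G)\to\check H^{n-1}(Q;G)$ is \emph{nontrivial}. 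On the other hand, with $r\colon g(Q)\hookrightarrow Y$ and $j\colon Q\hookrightarrow P$ the inclusions, $r\circ g|_Q=g\circ j$ gives $(g|_Q)^{*}\circ r^{*}=j^{*}\circ g^{*}=0$. Finally, since $\dim_G Y\le n-1$ we have $\check H^{n}(Y,g(Q);G)=0$, so the cohomology exact sequence of the pair $(Y,g(Q))$ shows that $r^{*}\colon\check H^{n-1}(Y;G)\to\check H^{n-1}(g(Q);G)$ is onto; combined with $(g|_Q)^{*}\circ r^{*}=0$ this forces $(g|_Q)^{*}=0$, a contradiction. Hence $X$ is a $V^n_G$-continuum with respect to $D^{n-1}_G$; and since $\check H^{n-1}(Y;G)=0$ for every $Y\in D^{n-2}_G$, any $\omega$-map of a partition onto such a $Y$ automatically has trivial $g^{*}$ in degree $n-1$, so the same $\omega$ shows $X$ is an Alexandroff manifold with respect to $D^{n-2}_G$.

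I expect the main obstacle to be the combined use of the two hypotheses on $M$ needed to manufacture $B$ with $B_i\subset\inte X_i$: the absolute-extensor property to produce a map $S^n\to M$ pinned down on $D_0\cup D_1$, and strong $n$-universality to upgrade it to an embedding while keeping enough control of the images of $D_0,D_1$. The other point requiring a little care is recognizing that the class $D^{n-1}_G$ is exactly what makes $r^{*}$ surjective in degree $n-1$ (via $\dim_G Y\le n-1\Rightarrow\check H^{n}(Y,A;G)=0$ for closed $A\subset Y$); once these are in hand, the rest is routine bookkeeping with $\omega$-maps and an appeal to Theorem~\ref{2.1}.
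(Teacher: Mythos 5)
Your proof is correct and follows essentially the same route as the paper: you embed an $n$-sphere into $X$ using the absolute-extensor and strong $n$-universality properties of $M$, invoke Theorem~\ref{2.1} for the sphere as an $(n,G)$-bubble, and use that $\dim_G Y\le n-1$ makes the restriction homomorphism in degree $n-1$ surjective to transfer triviality of $g^*$ to the intersection with the sphere. The only differences are presentational (you argue directly by extending the cover of the sphere to $X$, while the paper argues by contradiction with $\epsilon$-maps), not mathematical.
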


\begin{proof}
Suppose that $X$ is not a $V^n_G$-continuum with respect to the class $D^{n-1}_G$. So, we can find open sets
$U$ and $V$ in $X$ with disjoint closures such that
for every $\epsilon>0$ there exists a partition $C_\epsilon$ between $\overline{U}$ and $\overline{V}$ admitting an $\epsilon$-map $g_\epsilon$
onto a space $Y_\epsilon\in D^{n-1}_G$ such that $g_\epsilon^*\colon\check{H}^{n-1}(Y_\epsilon;G)\to\check{H}^{n-1}(C_\epsilon;G)$
is trivial.

Consider two different points $a, b$ from the $n$-sphere $\mathbb S^n$, and a map $f\colon\mathbb S^n\to M$ with
$f(a)\in U\cap M$ and $f(b)\in V\cap M$ (such a map exists because $M$ is an absolute extensor for $n$-dimensional compacta).
Since $M$ is strongly $n$-universal, we can approximate $f$ by a homeomorphism
$h\colon\mathbb S^n\to M$ such that $h(a)\in U$ and $h(b)\in V$. Therefore, $K_\epsilon=C_\epsilon\cap h(\mathbb S^n)$
is a partition of $h(\mathbb S^n)$ between $h(\mathbb S^n)\cap\overline{U}$ and $h(\mathbb S^n)\cap\overline{V}$.
Then $Z=g_\epsilon(K_\epsilon)$ is a closed
subset of $Y_\epsilon$, and since $\dim_GY_\epsilon\leq n-1$, $i_Z^*\colon\check{H}^{n-1}(Y_\epsilon;G)\to\check{H}^{n-1}(Z;G)$
is a surjective map, where
$i_Z:Z\hookrightarrow Y_\epsilon$ is the inclusion. So, we have the following commutative diagram with $g_{K_\epsilon}=g|K_\epsilon$ and
$i_{K_\epsilon}:K_\epsilon\hookrightarrow C_\epsilon$:
{ $$
\begin{CD}
\check{H}^{n-1}(Y_\epsilon;G)@>{{g_\epsilon^*}}>>\check{H}^{n-1}(C_\epsilon;G)\\
@ VV{i_Z^*}V
@VV{i_{K_\epsilon}^*}V\\
\check{H}^{n-1}(Z;G)@>{{g_{K_\epsilon}^*}}>>\check{H}^{n-1}(K_\epsilon;G).
\end{CD}
$$}\\
Because $g_\epsilon^*$ is trivial and $i_Z^*$ is surjective, $g_{K_\epsilon}^*$ is also trivial. Hence, for every $\epsilon>0$
there exists a partition $K_\epsilon$
between $h(\mathbb S^n)\cap\overline{U}$ and $h(\mathbb S^n)\cap\overline{V}$ admitting an $\epsilon$-map $g_{K_\epsilon}$ onto a space $Z$ such
that $g_{K_\epsilon}^*\colon\check{H}^{n-1}(Z;G)\to\check{H}^{n-1}(K_\epsilon;G)$ is trivial. This means that $\mathbb S^n$ is not
a $V^n_G$-continuum. On the other hand, $\mathbb S^n$ is an $(n,G)$-bubble for all $G$. So, by Theorem \ref{2.1} ,
$\mathbb S^n$ is a $V^n_G$-continuum -  a contradiction.
\end{proof}

\begin{cor}
Let $X$ be either the universal Menger compactum $\mu^n$ or $X$ be a metric compactification of the universal N\"{o}beling space $\nu^n$.
Then  $X$ is a $V^n_G$-continuum with
respect to the class $D^{n-1}_G$ for any $G$. Moreover, $\mu^n$ is not a $V^n_G$-continuum.
\end{cor}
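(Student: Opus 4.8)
The first statement follows at once from Theorem~\ref{2.2}. When $X=\mu^n$ put $M=X$: the Menger compactum $\mu^n$ has no isolated points, so it is a dense subspace of itself, and it is classical that $\mu^n$ is strongly $n$-universal and is an absolute extensor for $n$-dimensional compacta. When $X$ is a metric compactification of $\nu^n$ put $M=\nu^n$: by definition $\nu^n$ is dense in $X$, and the N\"obeling space is likewise strongly $n$-universal and an absolute extensor for $n$-dimensional compacta. In either case the hypotheses of Theorem~\ref{2.2} hold for every abelian group $G$, so $X$ is a $V^n_G$-continuum with respect to the class $D^{n-1}_G$, hence an Alexandroff manifold with respect to $D^{n-2}_G$.

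For the last assertion recall the observation recorded just before Theorem~\ref{2.1}: if $Z\in V^n_G$, then $\check{H}^{n-1}(C;G)\neq 0$ for every partition $C$ of $Z$. Indeed the identity map $\mathrm{id}_C\colon C\to C$ is an $\omega$-map for \emph{every} open cover $\omega$, and the induced homomorphism of $\mathrm{id}_C$ on $\check{H}^{n-1}(\,\cdot\,;G)$ is trivial exactly when $\check{H}^{n-1}(C;G)=0$. Consequently, to prove that $\mu^n$ is not a $V^n_G$-continuum it suffices to exhibit two disjoint massive closed subsets $X_0,X_1$ of $\mu^n$ and a partition $C$ between $X_0$ and $X_1$ with $\check{H}^{n-1}(C;G)=0$: for such a $C$ the map $\mathrm{id}_C$ (equivalently, the canonical map $p_\omega$ of $C$ onto the nerve of a finite open cover) is a ``bad'' $\omega$-map for every $\omega$, so the defining property of a $V^n_G$-continuum fails.

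To produce such a triple $(X_0,X_1,C)$ I would use that $\mu^n$ is a Menger $n$-manifold and therefore carries an ample supply of two-sided partitions: write $\mu^n=A\cup B$ with $A,B$ the closures of complementary open ``half-spaces'' and $C=A\cap B$ their common boundary, and then choose $X_0\subset\mathrm{int}\,A$ and $X_1\subset\mathrm{int}\,B$. The essential point is to arrange $C$ to be cohomologically trivial in degree $n-1$ over \emph{every} $G$. For $n\geq 2$ a natural candidate is $C\cong\mu^{n-2}\times[0,1]$: this compactum is $(n-1)$-dimensional and $\check{H}^{n-1}(\mu^{n-2}\times[0,1];G)\cong\check{H}^{n-1}(\mu^{n-2};G)=0$ for all $G$, since $\dim\mu^{n-2}=n-2$. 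One would embed $\mu^{n-2}\times[0,1]$ together with a bicollar into $\mu^n$, by strong $n$-universality, so that it separates $\mu^n$, and take $X_0,X_1$ to be the two resulting massive pieces; equivalently, one takes $C$ to be a carefully chosen level set of a fine map $\mu^n\to[0,1]$ carrying $X_0$ to $0$ and $X_1$ to $1$. This is, in essence, the argument of \cite{ss}. (The case $n=1$, in which $\mu^{n-2}$ is vacuous, requires instead a direct construction of a suitably clustered zero-dimensional partition admitting a fine map onto an arc.)

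The step I expect to be the real obstacle is exactly the last one: keeping the \v{C}ech cohomology of the separating set $C$ under control. A ``generic'' partition of $\mu^n$ tends to inherit the nonvanishing lower-dimensional \v{C}ech cohomology of $\mu^n$ itself --- already for $n=2$ a typical hyperplane section of $\mu^2\subset[0,1]^5$ behaves like a Menger curve and has $\check{H}^1\neq 0$ --- so a plain general-position argument will not suffice. One must carry out the separation inside the Menger manifold in a way that annihilates every potential $(n-1)$-cycle of $C$, and it is precisely here that the structure theory of Menger manifolds enters. Once a partition $C$ with $\check{H}^{n-1}(C;G)=0$ has been found, the conclusion $\mu^n\notin V^n_G$ is immediate from the observation in the second paragraph.
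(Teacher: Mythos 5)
The first half of your argument is correct and is exactly the paper's: both $\mu^n$ and $\nu^n$ are strongly $n$-universal absolute extensors for $n$-dimensional compacta, so Theorem \ref{2.2} applies with $M=\mu^n$, respectively $M=\nu^n$, and gives that $X$ is a $V^n_G$-continuum with respect to $D^{n-1}_G$. Your reduction of the second half is also the right one (and the one the paper uses): since a single partition $C$ with $\check{H}^{n-1}(C;G)=0$ admits, for every cover $\omega$, an $\omega$-map (e.g.\ the identity, or the nerve map) with trivial induced homomorphism in degree $n-1$, it suffices to exhibit one partition of $\mu^n$ with vanishing reduced $(n-1)$-dimensional \v{C}ech cohomology.

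The gap is that you never produce such a partition, and your sketched construction does not do the job. Strong $n$-universality only yields an embedding of $\mu^{n-2}\times[0,1]$ into $\mu^n$ approximating a given map; it gives no bicollar and no reason whatsoever that the image separates $\mu^n$, and you supply no argument for the separation (indeed you flag exactly this point as an ``obstacle'' you expect but do not resolve, and you leave the case $n=1$ entirely open). So the crucial existence statement --- a partition of $\mu^n$ that is cohomologically trivial in degree $n-1$ --- remains unproved in your write-up. The paper settles it by a different and cleaner device: by Dranishnikov's theorem \cite[Theorem 2]{dr1} there is a map $g\colon\mu^n\to\mathbb{I}^\infty$ such that $g^{-1}(P)$ is homeomorphic to $\mu^n$ for every $AR$-subspace $P\subset\mathbb{I}^\infty$. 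Taking $P$ to be an $AR$ partition of the Hilbert cube, the preimage $g^{-1}(P)$ is a partition of $\mu^n$ which is itself a copy of $\mu^n$; since $\mu^n$ is $(n-1)$-connected, $\check{H}^{n-1}(g^{-1}(P);G)=0$ for every $G$, and the conclusion follows. Note the contrast with your approach: rather than trying to force a ``small'' $(n-1)$-dimensional separator (where, as you rightly worry, controlling the $(n-1)$-cocycles is delicate), the paper makes the separator as large as possible --- a copy of $\mu^n$ itself --- and exploits its high connectivity. If you want to complete your route instead, you would need genuine Menger-manifold structure theory (Z-set unknotting, bicollared copies, or a fibered/level-set description of $\mu^n$), none of which is invoked or proved in your proposal.
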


\begin{proof}
Since both $\mu^n$ and $\nu^n$ are strongly $n$-universal absolute extensors for $n$-dimensional compacta, it follows from Theorem \ref{2.2}
that $X$ is a $V^n_G$-continuum with respect to the class $D^{n-1}_G$. To show that $\mu^n$ is not a $V^n_G$-continuum,
it suffices to find a partition $E$ of $\mu^n$ with trivial
$\check{H}^{n-1}(E;G)$. One can show the existence of such partitions using the geometric construction of the Menger compactum. We provide
a proof of this fact using Dranishnikov's results from \cite{dr1}. Indeed, by \cite[Theorem 2]{dr1}, there exists a map $g\colon\mu^n\to\mathbb I^\infty$  such that $g^{-1}(P)$ is homeomorphic to $\mu^n$ for
any $AR$-space $P\subset\mathbb I^\infty$. If $P\in AR$ is a partition of $\mathbb I^\infty$, then $g^{-1}(P)$
is a partition of $\mu^n$ homeomorphic to $\mu^n$. Hence, $\check{H}^{n-1}(g^{-1}(P);G)=0$.
\end{proof}

\section{Homogeneous continua}

In this section we prove that some homogeneous continua are $V^n_G$-continua. Recall that
a space $X$ is said to be {\em homogeneous} if for every two points $x,y\in X$ there exist a homeomorphism $h:X\to X$ with $h(x)=y$.
Krupski \cite{kr1} conjectured that any $n$-dimensional, homogeneous metric $ANR$-continuum is a $V^n$-continuum.
Next result provides a partial solution to Krupski's conjecture and a partial answer to Question 2.4 from \cite{tv}.

\begin{thm}
Let $X$ be a homogeneous, metric $ANR$-continuum with $\dim_GX=n\geq 1$ such that $\check{H}^n(X;G)\neq 0$, where $G$ is a principal ideal domain.
Then $X$ is a $V^n_G$-continuum.
\end{thm}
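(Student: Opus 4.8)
The plan is to derive the conclusion from Theorem~\ref{2.1} by showing that a homogeneous metric $ANR$-continuum $X$ with $\dim_GX=n$ and $\check{H}^n(X;G)\neq 0$ contains a (generalized) $(n,G)$-bubble ``spread uniformly'' over $X$, and then to transfer the $V^n_G$-property from that bubble to $X$ using homogeneity. First I would pick a non-zero element $\alpha\in\check{H}^n(X;G)$ and, using the continuity of \v{C}ech cohomology together with $\dim_GX=n$, choose a minimal (with respect to inclusion, or minimal in the sense of a Zorn-type argument) closed subset $F\subseteq X$ that still carries $\alpha$, i.e. with $i_F^*(\alpha)\neq 0$ but $i_A^*(i_F^*(\alpha))=0$ for every proper closed $A\subset F$; alternatively, following \cite{vt1,vt}, one passes to a map $f\colon X\to Y$ realizing $\alpha$ as $f^*$ of a class and shrinks the target so that $F$ becomes a generalized $(n,G)$-bubble. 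Since $G$ is a principal ideal domain, the relevant portions of the Mayer--Vietoris and universal-coefficient machinery behave well enough to guarantee that such an $F$ exists and is a genuine generalized $(n,G)$-bubble.

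Next I would exploit homogeneity: the translates $h(F)$, $h\in\mathrm{Homeo}(X)$, cover $X$, and since $F$ has non-empty interior would fail in general, the key point is instead that $X$ is an $ANR$-continuum, so a local argument near a single point suffices. The plan is to show that for \emph{every} point $p\in X$ there is a generalized $(n,G)$-bubble $B_p\subset X$ with $p\in\inte B_p$; by compactness finitely many such $B_{p_1},\dots,B_{p_k}$ cover $X$. Then, given two closed disjoint massive sets $X_0,X_1\subset X$, one of the $B_{p_j}$ meets both the interior of $X_0$-side and $X_1$-side in the relevant sense, or more precisely, one uses a partition-of-unity/nerve argument parallel to Claim~1 in the proof of Theorem~\ref{2.1}, run inside the union $\bigcup B_{p_j}$, to produce an open cover $\omega$ of $X$ and a cohomology class detecting every partition. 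The homogeneity is what upgrades ``$F$ is an $(n,G)$-bubble somewhere'' to ``bubbles are available everywhere,'' which is exactly what is needed to handle \emph{arbitrary} massive $X_0,X_1$ rather than ones positioned relative to a fixed $F$.

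With bubbles available near every point, the final step is essentially a localization of the argument already carried out for Theorem~\ref{2.1}. Given massive disjoint closed $X_0,X_1$, choose non-empty open $U_i\subset\inte X_i$ with $\overline{U}_0\cap\overline{U}_1=\varnothing$; pick a bubble $B$ meeting a path (or $ANR$-connectedness witness) joining $U_0$ to $U_1$, apply the Mayer--Vietoris analysis of Claim~1 inside $B$ to get an open cover $\omega_B$ of $B\setminus(U_0\cup U_1)$ and a class $e\in\check{H}^{n-1}(|\omega_B|;G)$ detecting partitions of $B$; then extend $\omega_B$ to an open cover $\omega$ of $X$ (as in the second half of the proof of Theorem~\ref{2.1}) and check that any partition $P$ of $X$ between $\overline{U}_0$ and $\overline{U}_1$ restricts to a partition of $B$, so the detecting class obstructs any $\omega$-map $g\colon P\to Y$ with trivial $g^*$ on $\check{H}^{n-1}$.

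The main obstacle I expect is the ``bubbles everywhere'' step: proving that in a homogeneous $ANR$-continuum with $\dim_GX=n$ and $\check{H}^n(X;G)\neq 0$ every point lies in the interior of a generalized $(n,G)$-bubble. Homogeneity gives that bubbles, once one exists, can be moved around, but one still must show a bubble with non-empty interior (or with interior containing a prescribed point) exists at all — this is where $\dim_GX=n$ (so local boundaries are $\dim_G\le n-1$ and cannot carry $\check H^n$), the $ANR$ hypothesis, and the principal-ideal-domain assumption on $G$ must be combined carefully, presumably via the characterization of $\dim_G$ by extension of maps into $K(G,n)$ and a Zorn's-lemma minimization of a closed carrier of $\alpha$ that is then shown to be ``fat'' by homogeneity.
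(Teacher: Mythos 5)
There is a genuine gap, and it sits exactly where you predicted. The paper's proof is essentially one line: by Yokoi's theorem \cite[Theorem~3.3]{yo}, a homogeneous metric $ANR$-continuum with $\dim_GX=n\geq 1$, $\check{H}^n(X;G)\neq 0$ and $G$ a principal ideal domain is itself an $(n,G)$-bubble, and then Theorem~\ref{2.1} applies directly. Your plan never establishes this global statement; instead you try to manufacture \emph{proper} closed (generalized) bubbles with non-empty interior at every point and then localize. But nothing in your sketch produces a bubble with interior: a Zorn-minimal closed carrier $F$ of a class $\alpha$ only satisfies that the \emph{chosen} class dies on proper closed subsets (not all of $f_A^*(\check{H}^n(Y;G))$, so even ``generalized bubble'' needs more care), and, more importantly, there is no mechanism in your argument forcing $F$ to be fat. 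In fact, under the hypotheses the correct conclusion is the opposite of your picture: the minimal carrier is all of $X$ (that is Yokoi's theorem, whose proof uses homogeneity through Effros' theorem in an essential way), so under these hypotheses proper sub-bubbles with nonzero $\check H^n$ cannot exist at all. Your plan thus replaces the one nontrivial input by an unproved ``bubbles everywhere'' claim.

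Even granting bubbles $B_{p_1},\dots,B_{p_k}$ covering $X$, the final localization step fails. Given a partition $P$ of $X$ between $\overline{U}_0$ and $\overline{U}_1$ admitting an $\omega$-map $g$ onto $Y$ with $g^*$ trivial on $\check{H}^{n-1}(Y;G)$, you want to contradict the detecting class inside one bubble $B$ by restricting to $P\cap B$. But the restriction of $g$ maps $P\cap B$ onto $Z=g(P\cap B)\subset Y$, and what you need is triviality of $(g|_{P\cap B})^*$ on $\check{H}^{n-1}(Z;G)$; this does not follow from triviality of $g^*$ on $\check{H}^{n-1}(Y;G)$ unless the restriction $\check{H}^{n-1}(Y;G)\to\check{H}^{n-1}(Z;G)$ is surjective --- which is exactly why Theorem~\ref{2.2} must assume $\dim_GY\leq n-1$, and why its conclusion is only the $V^n_G$-property relative to the class $D^{n-1}_G$. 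There is also no reason a single $B_{p_j}$ meets both $U_0$ and $U_1$ (a path joining them need not stay in one bubble, and meeting the path is not meeting $U_0$ and $U_1$). The paper's own Proposition~\ref{sep} is instructive here: it starts from a $V^n_G$-subcontinuum $K\subset X$ (via Stefanov \cite{ss}) and uses homogeneity and Effros' theorem to move $K$ across a separator, yet it only obtains the weaker statement about partitions $C$ with $\dim_GC\leq n-1$, precisely because the full $V^n_G$-property does not transfer from a subcontinuum to the ambient space by such a localization. To repair your argument you would, in effect, have to prove Yokoi's theorem that $X$ itself is an $(n,G)$-bubble, which is where the $ANR$, PID and $\dim_GX=n$ hypotheses are really consumed.
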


\begin{proof}
According to \cite[Theorem 3.3]{yo}, any space $X$ satisfying the conditions from this theorem is an $(n,G)$-bubble. Hence, by Theorem \ref{2.1},
$X\in V^n_G$.
\end{proof}

Bing and Borsuk \cite{bb} raised the question whether no compact acyclic in dimension $n-1$ subset of $X$ separates $X$, where
$X$ is a metric  $n$-dimensional homogeneous $ANR$-continuum. Yokoi \cite[Corollary 3.4]{yo}
provided a partial positive answer to this question in
the case $X$ is
a homogeneous metric $n$-dimensional $ANR$-continuum such that $\check{H}^{n}(X;\mathbb Z)\neq 0$. Next proposition is a version of Yokoi's
result when $X$ is not necessarily $ANR$.

\begin{pro}\label{sep}
Let $X$ be a finite-dimensional homogeneous metric continuum
with $\check{H}^{n}(X;G)\neq 0$. Then $\check{H}^{n-1}(C;G)\neq 0$ for
any partition $C$ of $X$ such that $\dim_GC\leq n-1$.
\end{pro}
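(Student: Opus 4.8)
The plan is to argue by contradiction, following the strategy of Yokoi's \cite[Corollary 3.4]{yo} but recovering enough $(n,G)$-bubble structure without the $ANR$ hypothesis, so as to apply Theorem~\ref{2.1}. Suppose $C$ is a partition of $X$ with $\dim_G C\le n-1$ and $\check H^{n-1}(C;G)=0$. Write $X\setminus C=U_1\cup U_2$ with $U_1,U_2$ non-empty, disjoint and open, and set $P_i=X\setminus U_{3-i}$, so that $P_1,P_2$ are closed, $P_1\cup P_2=X$, $P_1\cap P_2=C$, and $\inte P_i\supseteq U_i\ne\varnothing$. Since $\dim_G C\le n-1$ we have $\check H^n(C;G)=0$ (more generally $\check H^{n-1}(D;G)=0$ for every closed $D\subseteq C$), while $\check H^{n-1}(C;G)=0$ by hypothesis; hence the relevant segment of the Mayer--Vietoris sequence of $\{P_1,P_2\}$ reduces to an isomorphism $\check H^n(X;G)\cong\check H^n(P_1;G)\oplus\check H^n(P_2;G)$ given by the restriction maps. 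As $\check H^n(X;G)\ne0$, one summand is non-trivial; relabelling, $\check H^n(P_1;G)\ne0$ and $\check H^n(X;G)\to\check H^n(P_1;G)$ is a split epimorphism. The same applies to every homeomorphic copy $h(C)$ of $C$, so $\check H^n(X;G)\to\check H^n(h(P_1);G)$ is a split epimorphism onto a non-trivial group for every homeomorphism $h$ of $X$.

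Now I would bring in homogeneity. Fixing $x_0\in\inte P_1$ and using compactness, the open cover $\{h(\inte P_1):h\in\mathrm{Homeo}(X)\}$ of $X$ admits a finite subcover; adjoining finitely many further homeomorphisms $h$ so that the sets $X\setminus h(P_1)=h(X\setminus P_1)$ also cover $X$, I obtain homeomorphisms $h_1,\dots,h_k$ with $X=\bigcup_j h_j(P_1)$ and $\bigcap_j h_j(P_1)=\varnothing$; discarding any member contained in another, I may assume no one of the proper closed sets $A_j:=h_j(P_1)$ contains another. So $X$ is covered by finitely many proper closed sets $A_1,\dots,A_k$, none containing another, with empty total intersection, each $\check H^n(A_j;G)$ being a non-zero quotient of $\check H^n(X;G)$ by a restriction.

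The decisive step is to derive $\check H^n(X;G)=0$, contradicting $\check H^n(X;G)\ne0$. The main line I would pursue: by a Zorn's-lemma argument legitimised by the continuity of \v{C}ech cohomology, pass to a closed $B\subseteq P_1$ minimal among those to which a fixed non-zero class of $\check H^n(X;G)$ restricts non-trivially; representing that class over a finite open cover and collapsing a suitable subcomplex of its nerve exhibits $B$ as a generalized $(n,G)$-bubble, whence $B\in V^n_G$ by Theorem~\ref{2.1}; then, using homogeneity again (e.g.\ the Effros theorem) to reposition $B$ across $C$, transfer this back to the desired statement about $C$. I expect precisely this last transfer --- from the $V^n_G$-property of a bubble lying inside the single side $P_1$ of $C$ to a statement about the separator $C$ of $X$ itself --- to be the main obstacle, and the hypothesis $\dim_G C\le n-1$ (which also gives surjectivity of restrictions in degree $n-1$ onto closed subsets) to be what powers it. A fallback is to run the Mayer--Vietoris spectral sequence of the finite closed cover $\{A_j\}$ directly: its finite length (from the empty total intersection), the edge map $\check H^n(X;G)\to\bigoplus_j\check H^n(A_j;G)$, and vanishing information on the multiple intersections should force $\check H^n(X;G)=0$. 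As a sanity check, for $n=1$ this specialises to the classical fact that a homogeneous continuum with $\check H^1(X;G)\ne0$ has no cut points.
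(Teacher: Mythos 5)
Your proposal is an outline whose two decisive steps are missing, and they are exactly the steps the paper's proof turns on. First, the existence of a $V^n_G$-subcontinuum of $X$: the paper gets this in one line by invoking Stefanov's theorem \cite[Theorem 2]{ss}, and this is precisely where the finite-dimensionality of $X$ is used. Your substitute --- a Zorn/minimal-carrier argument followed by the assertion that ``collapsing a suitable subcomplex of its nerve exhibits $B$ as a generalized $(n,G)$-bubble'' --- is not carried out and is not obviously true: minimality of the carrier $B$ only controls the restrictions of the \emph{one} chosen class, whereas the definition of a generalized $(n,G)$-bubble demands a surjection $f\colon B\to Y$ whose \emph{entire} image $f^*(\check{H}^n(Y;G))$ dies on every proper closed subset of $B$; you construct no such $f$ and $Y$. (Were this routine, the question raised in Section 4 of the paper --- whether finite-dimensionality can be dropped from Stefanov's result --- would be settled.) Your ``fallback'' via the Mayer--Vietoris spectral sequence of the closed cover $\{h_j(P_1)\}$ is unsound as stated: an empty total intersection together with nonvanishing $\check{H}^n$ of the pieces provides no mechanism forcing $\check{H}^n(X;G)=0$; lower-degree terms of the intersections feed into $\check{H}^n(X;G)$, as the example of a circle covered by three arcs with empty triple intersection already shows in degree one.

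Second, even granting a $V^n_G$-continuum $K\subset X$, the contradiction in the paper comes from a concrete homogeneity/Effros argument that you explicitly defer as ``the main obstacle.'' The paper first replaces $C$ by a closed subset with empty interior, notes that $\dim_G C\le n-1$ makes $\check{H}^{n-1}(C;G)\to\check{H}^{n-1}(A;G)$ epi, hence $\check{H}^{n-1}(A;G)=0$ for \emph{every} closed $A\subset C$; this both rules out $K\subset C$ (a $V^n_G$-continuum has no partition with vanishing $\check{H}^{n-1}$) and powers the endgame. Then homogeneity plus the Effros theorem \cite{ef}, as in Krupski \cite{kru}, yield a small homeomorphism $h$ with $h(K)$ meeting both components $U,V$ of $X\setminus C$, so that $S=h(K)\cap C$ is a partition of $h(K)$ lying in $C$, whence $\check{H}^{n-1}(S;G)=0$, contradicting $h(K)\in V^n_G$. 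None of this transfer appears in your write-up, and your initial Mayer--Vietoris splitting $\check{H}^n(X;G)\cong\check{H}^n(P_1;G)\oplus\check{H}^n(P_2;G)$, while correct, plays no role in the paper's (or any completed) argument. As it stands the proposal identifies the right ingredients (a bubble-type subcontinuum, Theorem~\ref{2.1}, homogeneity/Effros) but proves neither of the two steps that make them fit together.
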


\begin{proof}
Suppose there exists a partition $C$ of $X$ such that $\check{H}^{n-1}(C;G)=0$ and $\dim_GC\leq n-1$. The last inequality implies that
the inclusion homomorphism $\check{H}^{n-1}(C;G)\rightarrow\check{H}^{n-1}(A;G)$ is an epimorphism for every closed set $A\subset C$.
So, $\check{H}^{n-1}(A;G)=0$ for all closed subsets of $C$. Therefore, we may assume that $C$ does not have any interior points.
Since $\check{H}^{n}(X;G)\neq 0$, according to \cite[Theorem 2]{ss}, there exists a compact subset $K\subset X$ with $K\in V^n_G$.
Since $X$ is homogeneous, we may also assume that $K\cap C\neq\varnothing$. Observe that $z\in K\backslash C$ for some $z$.
Indeed, the inclusion $K\subset C$ would imply that $\check{H}^{n-1}(P;G)=0$ for every
partition $P$ of $K$. Let $X\backslash C=U\cup V$ and $z\in V$,
where $U$ and $V$ are nonempty, open and disjoint sets in $X$. Then the Effros theorem \cite{ef} allows us to push $K$ towards $U$ by a
small homeomorphism $h\colon X\to X$ so that the image $h(K)$ meets both $U$ and $V$ (see the proof of Lemma 2 from \cite{kru}
for a similar application of Effros' theorem). Therefore, $S=h(K)\cap C$ is a partition of $h(K)$ and $\check{H}^{n-1}(S;G)=0$
because $S\subset C$, a contradiction.
\end{proof}

Proposition~\ref{sep} provides a partial answer to a question of Kallipoliti-Papasoglu \cite{kp} whether homogeneous two-dimensional
metric locally connected continua cannot be separated by arcs.

\begin{cor}
No finite-dimensional metric homogeneous continuum $X$ with $\check{H}^{2}(X;G)\neq 0$ can be separated by any one-dimensional compactum
$C$ with $\check{H}^{1}(C;G)=0$.
\end{cor}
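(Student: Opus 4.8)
The plan is to obtain this as the instance $n=2$ of Proposition~\ref{sep}, after two elementary reductions.

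First I would replace ``separates'' by ``is a partition.'' Suppose a compactum $C\subset X$ separates $X$. Since $C$ is closed, $X\setminus C=U\cup V$ with $U,V$ open, disjoint and nonempty, so $C$ is a partition of $X$ in the sense used in the Introduction. Thus it suffices to show that $X$ has no partition $C$ that is one-dimensional and satisfies $\check{H}^1(C;G)=0$.

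Next I would note that a one-dimensional compactum $C$ has $\dim_G C\leq\dim C=1$ for every abelian group $G$, since cohomological dimension never exceeds covering dimension (all higher \v{C}ech cohomology of a space of covering dimension $\le n$ vanishes, for any coefficient group). Hence such a $C$ meets the hypotheses of Proposition~\ref{sep} with $n=2$: it is a partition of the finite-dimensional homogeneous metric continuum $X$, which satisfies $\check{H}^2(X;G)\neq 0$, and $\dim_G C\leq n-1=1$. Proposition~\ref{sep} then forces $\check{H}^{1}(C;G)\neq 0$, contradicting the assumption $\check{H}^{1}(C;G)=0$. Therefore no such $C$ can separate $X$. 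In particular, since an arc is a one-dimensional $AR$ and hence has trivial reduced first \v{C}ech cohomology, this says that a two-dimensional homogeneous metric continuum $X$ with $\check{H}^2(X;G)\neq 0$ cannot be separated by an arc, which is the connection with the Kallipoliti-Papasoglu question.

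As for the difficulty: at the level of the corollary there is essentially none---the content is entirely in Proposition~\ref{sep}, which in turn rests on the existence of a $V^n_G$-subcontinuum $K\subset X$ (via \cite[Theorem 2]{ss}) together with the use of the Effros theorem to slide $K$ across the putative separator so that $K\cap C$ becomes a partition of $K$ with vanishing $\check{H}^{n-1}$, contradicting $K\in V^n_G$. The only minor points to watch are the two reductions above, namely that a closed separator of a continuum is a partition and that covering dimension $1$ bounds cohomological dimension; both are standard.
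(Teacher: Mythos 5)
Your proof is correct and follows the paper's (implicit) route exactly: the corollary is the case $n=2$ of Proposition~\ref{sep}, using the two standard observations that a closed separator of a continuum is a partition and that $\dim_G C\leq\dim C=1$ for a one-dimensional compactum.
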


\section{Some remarks and problems}

The class of $(n,G)$-bubbles is stable in the sense of the following proposition.

\begin{pro}
Let $X$ be a metric compactum  admitting an $\epsilon$-map onto an $(n,G)$-bubble for any $\epsilon>0$. Then $X$ is also an
$(n,G)$-bubble.
\end{pro}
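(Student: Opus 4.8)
The plan is to show that the two defining conditions of an $(n,G)$-bubble --- namely $\check{H}^n(X;G)\neq 0$ and $\check{H}^n(A;G)=0$ for every proper closed $A\subset X$ --- are inherited by $X$ from the bubbles it maps onto by small maps. The technical engine is the standard fact that \v{C}ech cohomology of a compactum is the direct limit of the cohomologies of nerves of its finite open covers, together with the observation that an $\epsilon$-map $g\colon X\to B$ onto an $(n,G)$-bubble $B$, for $\epsilon$ small relative to a chosen finite open cover $\omega$ of $X$, factors (up to homotopy at the level of nerves) through $|\omega|$: writing $\gamma$ for a finite open cover of $B$ with $g^{-1}(\gamma)$ refining $\omega$, one gets maps $|g^{-1}(\gamma)|\to|\omega|$ and $|g^{-1}(\gamma)|\to|\gamma|\cong|g^{-1}(\gamma)|$ induced by the covers, hence a commuting triangle relating $p_\omega^*$, the projection to $|g^{-1}(\gamma)|$, and $g^*$.

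First I would prove $\check{H}^n(X;G)\neq 0$. Fix $\epsilon_0>0$ witnessing that some finite open cover $\omega$ of $X$ has the property that every $\epsilon_0$-map is ``$\omega$-close to a homeomorphism in the relevant sense'' --- more precisely, choose $\omega$ so that $p_\omega^*$ is nonzero on $\check{H}^n(|\omega|;G)$ would be the goal, but since we do not yet know $\check{H}^n(X;G)\neq 0$ we argue contrapositively: suppose $\check{H}^n(X;G)=0$. Then for every finite open cover $\omega$ of $X$ there is a refinement $\omega'$ such that the bonding map $\check{H}^n(|\omega|;G)\to\check{H}^n(|\omega'|;G)$ is zero. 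Take $\epsilon$ a Lebesgue number for $\omega'$, and let $g\colon X\to B$ be an $\epsilon$-map onto the $(n,G)$-bubble $B$. Pulling back a finite cover $\gamma$ of $B$ fine enough that $g^{-1}(\gamma)$ refines $\omega'$, the induced simplicial maps fit into a diagram showing that $g^*\colon\check{H}^n(B;G)\to\check{H}^n(X;G)$ factors through the zero bonding map $\check{H}^n(|\omega|;G)\to\check{H}^n(|\omega'|;G)$ after choosing $\omega$ fine enough to approximate $\check{H}^n(B;G)$; chasing this for all generators of the direct limit $\check{H}^n(B;G)$ forces $g^*=0$. But $g$ is a surjective $\epsilon$-map of compacta, so for $\epsilon$ small $g$ is a near-homeomorphism onto its image in the Cech sense, and in particular $g^*$ must be a monomorphism on the finite-dimensional obstructions detecting $\check{H}^n(B;G)\neq 0$ --- concretely, surjectivity of $g$ gives that $g^*$ is injective on $\check{H}^n$ is false in general, so instead I would use: since $B$ is a bubble and $g$ surjective, $g$ is \emph{not} factorable through any compactum of the form $|\omega'|$ with $\check{H}^n$ killed unless $\check{H}^n(X;G)$ already carries the class. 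This contradiction yields $\check{H}^n(X;G)\neq 0$.

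Next I would handle the minimality condition. Let $A\subsetneq X$ be closed and pick $x\in X\setminus A$; choose $\delta>0$ with $\dist(x,A)>2\delta$. For $\epsilon<\delta$ let $g_\epsilon\colon X\to B_\epsilon$ be an $\epsilon$-map onto an $(n,G)$-bubble. Then $g_\epsilon(A)$ and $g_\epsilon(\{x\})$ are disjoint (as $g_\epsilon$ is an $\epsilon$-map and points of $A$ are $\epsilon$-far from $x$), so $g_\epsilon(A)$ is a \emph{proper} closed subset of $B_\epsilon$, whence $\check{H}^n(g_\epsilon(A);G)=0$ by the bubble property. The restriction $g_\epsilon|_A\colon A\to g_\epsilon(A)$ is again an $\epsilon$-map, and since this holds for all $\epsilon>0$, the same direct-limit-of-nerves argument (now run on $A$ and on $g_\epsilon(A)$, using $\check{H}^n(g_\epsilon(A);G)=0$) shows every element of $\check{H}^n(A;G)$ is killed, i.e. $\check{H}^n(A;G)=0$. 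Combined with the previous paragraph, $X$ is an $(n,G)$-bubble.

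The main obstacle is the first paragraph's claim that an $\epsilon$-map onto a bubble ``detects'' the top cohomology of $X$ --- i.e. making precise the compactness/continuity argument that passes from the finitely many nerve approximations of $\check{H}^n(B;G)$ to a single finite cover of $X$ through which the relevant class factors, and ruling out that $g_\epsilon^*$ is identically zero. This is exactly where one invokes that $\check{H}^*$ of a compactum is continuous (the $\mathrm{Limsup}$/direct-limit description via nerves recalled in the proof of Theorem~\ref{2.1}) and that an $\epsilon$-map for arbitrarily small $\epsilon$ forces the map on \v{C}ech cohomology to be a monomorphism in the limit; the bubble hypothesis on $B_\epsilon$ then transfers nontriviality back to $X$. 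The minimality half is routine once this is set up, since it only uses that the image of a proper closed set under a sufficiently fine $\epsilon$-map is proper.
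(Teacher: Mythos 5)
The minimality half of your proposal is essentially the paper's own argument and is sound after one repair: from $\check{H}^n(g_\epsilon(A);G)=0$ you may not conclude anything about the cohomology of a single nerve; rather, given a class on a nerve of a cover $\gamma$ of $A$, you choose $\epsilon$ smaller than $\dist(x,A)$ and than a Lebesgue number of $\gamma$, pull back a fine cover $\theta$ of the proper set $g_\epsilon(A)$ (nerves of pullbacks along the surjection $g_\epsilon|_A$ are isomorphic to nerves downstairs, compatibly with refinement), and then kill the class by pulling back a \emph{further} refinement of $\theta$, which exists because the class dies in the direct limit $\check{H}^n(g_\epsilon(A);G)=0$. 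The genuine gap is in your first half, exactly at the point you yourself flag. An $\epsilon$-map onto a bubble only produces, for each $\epsilon$, an arbitrarily fine cover $\beta_\epsilon$ of $X$ with $\check{H}^n(|\beta_\epsilon|;G)\neq 0$; since $\check{H}^n(X;G)=\varinjlim_\omega \check{H}^n(|\omega|;G)$, this proves nothing unless the classes carried by the $\beta_\epsilon$ are compatible under the refinement bonding maps, and nothing forces $g_\epsilon^*$ to be nontrivial. Your substitutes for this step (``near-homeomorphism in the \v{C}ech sense'', ``$g$ is not factorable \dots unless $\check{H}^n(X;G)$ already carries the class'', ``$\epsilon$-maps force a monomorphism in the limit'') are restatements of the desired conclusion, not arguments; likewise your auxiliary claim that $\check{H}^n(X;G)=0$ yields a single refinement with zero bonding homomorphism needs justification, since nerve cohomology groups need not be killed all at once.

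Moreover, this step cannot be pushed through as stated: let $p\colon\mathbb{S}^1\to\mathbb{S}^1$ be a surjective map of degree zero (fold the circle over itself) and let $X=\varprojlim(\mathbb{S}^1,p)$. The projections $\pi_k\colon X\to\mathbb{S}^1$ are surjective and their fibers have diameter at most $2^{-k}$ in the standard metric on the inverse limit, so for every $\epsilon>0$ the continuum $X$ admits a surjective $\epsilon$-map onto the $(1,\mathbb{Z})$-bubble $\mathbb{S}^1$; yet by continuity of \v{C}ech cohomology $\check{H}^1(X;\mathbb{Z})=\varinjlim\bigl(\mathbb{Z}\xrightarrow{0}\mathbb{Z}\xrightarrow{0}\cdots\bigr)=0$, so every $\pi_k^*$ is trivial and no top-dimensional class is transferred to $X$. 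Thus the nontriviality condition in the definition of an $(n,G)$-bubble is simply not inherited from $\epsilon$-images, and any correct treatment must either assume $\check{H}^n(X;G)\neq 0$ outright or impose some coherence on the maps $g_\epsilon$. You should be aware that the paper's own proof of the first half makes the same jump (``$\check{H}^n(|\beta|;G)\neq 0$, which implies $\check{H}^n(X;G)\neq 0$''), so the obstacle is not an artifact of your write-up; but as it stands your first paragraph is not a proof, and the place you identify as the ``main obstacle'' is precisely where the argument breaks down.
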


\begin{proof}
First, let us show that $\check{H}^{n}(X;G)\neq 0$. Take any open cover $\omega$ of $X$ and let $\epsilon$ be the Lebesgue number of $\omega$.
There exists a surjective $\epsilon$-map $f\colon X\to Y_\epsilon$ with $Y_\epsilon$ being an $(n,G)$-bubble. Since
$\check{H}^{n}(Y_\epsilon;G)\neq 0$, we can find an open cover $\alpha$ of $Y_\epsilon$ such that $\check{H}^{n}(|\alpha|;G)\neq 0$ (we use the
notations from the proof of Theorem \ref{2.1}). Then $\beta=f^{-1}(\alpha)$ is an open cover of $X$ refining $\omega$ such that $|\beta|$ is homeomorphic
to $|\alpha|$. So, $\check{H}^{n}(|\beta|;G)\neq 0$, which implies $\check{H}^{n}(X;G)\neq 0$.

Suppose now that $A$ is a proper closed subset of $X$ and $\gamma$ an open (in $A$) cover of $A$. Extend each $U\in\gamma$ to an open set $V(U)$ in
$X$ and let $W=\cup\{V(U):U\in\gamma\}$. We can suppose that $W\neq X$. Choose a surjective $\eta$-map $g\colon X\to Y_\eta$ such that $Y_\eta$ is an
$(n,G)$-bubble with $\eta$ being a positive number smaller than both  ${\rm dist} (A,X\backslash W)$ and the Lebesgue number of $\gamma$. Then $B=g(A)$
is a proper closed subset of $Y_\eta$ such that $g^{-1}(B)\subset W$. There exists an open cover $\theta$ of $B$ such that
the family $\delta=\{g^{-1}(G)\cap A:G\in\theta\}$ is an open cover of $A$ refining $\gamma$. Obviously, $|\delta|$ is homeomorphic to $|\theta|$.
Since $\check{H}^{n}(B;G)=0$, we have $\check{H}^{n}(|\theta|;G)=\check{H}^{n}(|\delta|;G)=0$. Hence $\check{H}^{n}(A;G)=0$, which completes the proof.
\end{proof}

Now, we are going to discuss some problems. The main question suggested by the results from this paper is to remove in Theorem 3.1
some of the conditions about $X$. Since, according to Corollary 2.3, $\mu^n$ is not a $V^n_G$-continuum for any $G$, the condition
$X$ to be an $ANR$ cannot be removed. So, we have the following question.

\begin{problem}
Let $X$ be a homogeneous metric $ANR$-continuum $X$ with $\dim_GX=n$, where $G$ is any abelian group. Is  $X$ a $V^n_G$-continuum?
\end{problem}

Since any $V^n_G$-continuum with respect to the class $D^{n-1}_G$ is $V^n$, next question is still interesting.

\begin{problem}
Let $X$ be a homogeneous metric continuum $X$ with $\dim_GX=n$. Is  $X$ a $V^n_G$-continuum with
respect to the class $D^{n-1}_G$? What  if $\check{H}^n(X;G)\neq 0$?
\end{problem}

Another question is whether finite-dimensionality can be removed from the result of Stefanov \cite{ss} which
was applied above.

\begin{problem}
Let $X$ be a metrizable compactum with $\check{H}^{n}(X;G)\neq 0$ for some group $G$ and $n\geq 1$. Does $X$ contain a $V^n_G$-continuum?
\end{problem}

We can show that any finite simplicial complex is a generalized $(n,G)$-bubble if and only if it is an $(n,G)$-bubble. So, our last question is whether
this remain true for all metric compacta.

\begin{problem}
Is there any metric compactum $X$ which is a generalized $(n,G)$-bubble but not an $(n,G)$-bubble?
\end{problem}

%%%%%%%%%% Bibliography %%%%%%%%%%%%%%%%%%%%%%%%%%

\end{document}